\theoremstyle{plain}
\newtheorem{theorem}{Theorem}
\newtheorem{lemma}[theorem]{Lemma}
\theoremstyle{definition}
\newtheorem{conjecture}[theorem]{Conjecture}
\theoremstyle{remark}
\newtheorem{remark}[theorem]{Remark}
\newcommand{\Oh}[1]{\mathcal{O}\left(#1\right)}
\newcommand\numberthis{\addtocounter{equation}{1}\tag{\theequation}}
\def\modd#1 #2{#1\ \mbox{\rm (mod}\ #2\mbox{\rm )}}
\begin{document}
\begin{center}
\vskip 1cm{\Large\bf 
On the Sum of Squarefree Integers and a Power of Two
}
\vskip 1cm
\large
Christian Hercher\\
Institut f\"{u}r Mathematik\\
Europa-Universit\"{a}t Flensburg\\
Auf dem Campus 1c\\
24943 Flensburg\\
Germany \\
\href{mailto:christian.hercher@uni-flensburg.de}{\tt christian.hercher@uni-flensburg.de} \\
\end{center}

\begin{abstract}
Erd\H{o}s conjectured that every odd number greater than one can be expressed as the sum of a squarefree number and a power of two. Subsequently, Odlyzko and McCranie provided numerical verification of this conjecture up to $10^7$ and $1.4\cdot 10^9$. In this paper, we extend the verification to all odd integers up to $2^{50}$, thereby improving the previous bound by a factor of more than $8\cdot 10^5$. Our approach employs a highly parallelized algorithm implemented on a GPU, which significantly accelerates the process. We provide details of the algorithm and present novel heuristic computations and numerical findings, including the smallest odd numbers $<2^{50}$ that require a higher power of two as all smaller ones in their representation.
\end{abstract}

\section{Introduction}
In 1950,  Erd\H{o}s \cite{Erdos50} proved the existence of an infinite number of positive integers that cannot be expressed as the sum of a prime number and a power of two. Subsequently, on various occasions \cite{Erdos80, Erdos97},  he relaxed the condition on the first summand and proposed the conjecture that every odd number greater than one can be expressed as the sum of a squarefree number and a power of two. Formally:

\begin{conjecture}[Erd\H{o}s]
Let $n>1$ be an odd integer. Then there exist a squarefree integer $m$ and an integer $k$ such that: \[n=m+2^k.\]
\end{conjecture}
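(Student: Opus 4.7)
The plan is to attempt a covering/union-bound argument. For odd $n > 1$, set $L := \floor{\log_2 n}$ and consider the $L - 1$ candidate decompositions $n = m_k + 2^k$ with $m_k := n - 2^k$, for $k = 1, 2, \ldots, L - 1$; each $m_k$ is a positive odd integer. It suffices to exhibit at least one $k$ for which $m_k$ is squarefree.

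The first step is to translate ``not squarefree'' into a union of congruence conditions. The integer $m_k$ fails to be squarefree iff $p^2 \mid m_k$ for some odd prime $p$ with $p^2 \leq n$, equivalently $2^k \equiv n \pmod{p^2}$. Writing $d_p$ for the multiplicative order of $2$ modulo $p^2$, the set of ``bad'' indices $k \in \{1,\dots,L-1\}$ attributable to a single prime $p$ is either empty (if $n$ is not in the cyclic subgroup generated by $2$ in $(\mathbb{Z}/p^2\mathbb{Z})^\ast$) or an arithmetic progression with common difference $d_p$, hence of size at most $\lceil L/d_p \rceil$. The target inequality would then be
\[ \sum_{\substack{p\ \text{odd prime}\\ p^2 \leq n}} \left(\frac{L}{d_p} + 1\right) \;<\; L - 1, \]
and the conjecture would follow by pigeonhole.

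The hard part, and arguably the reason Erd\H{o}s's conjecture has remained open for seven decades, is controlling this sum. The ``$+1$'' terms alone contribute $\pi(\sqrt n) \sim 2\sqrt n / \log n$, which utterly dwarfs the target $L \sim \log_2 n$; the crude union bound is hopeless. Any genuine proof must instead exploit that for primes $p$ in the range $n^{1/3} < p \leq \sqrt n$ the events $\{p^2 \mid m_k\}$ for distinct $k$ force distinct primes $p$, and the $m_k$ can collectively support only \Oh{\log n} such large squared factors. Small primes would be handled explicitly using the exact values of $d_p$ (for instance $d_3 = 6$, $d_5 = 20$, $d_7 = 21$), while the intermediate range requires a sharp bound on $\sum_p 1/d_p$ that is qualitatively comparable to what Artin's conjecture on primitive roots supplies on average.

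The main obstacle is precisely this last step: unconditionally excluding a conspiracy of primes $p$ with anomalously small order of $2$ modulo $p^2$ (a Wieferich-type phenomenon) that, together with the combinatorial structure of the progressions $k \mapsto k + d_p \mathbb{Z}$, cover every $k \in \{1,\dots,L-1\}$ for some sporadic $n$. Current sieve methods and character-sum techniques do not seem to provide the required uniformity in $n$ and $p$ simultaneously. In lieu of such tools, the contribution of the present paper is a rigorous computational verification of the conjecture for all odd $n \leq 2^{50}$, as developed in the following sections.
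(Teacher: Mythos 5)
The statement you were asked to prove is labelled a \emph{conjecture} in the paper, and the paper contains no proof of it: its contribution is a computational verification for all odd $n<2^{50}$ (Theorem 3) together with the heuristics of Section~\ref{sec:Heuristics}. Your proposal is therefore correctly calibrated --- it does not claim to prove the statement, and a reviewer should not pretend otherwise. Your diagnosis of why the naive union bound fails is accurate: the bad set for a prime $p$ is indeed either empty or a progression of common difference equal to the order $d_p$ of $2$ modulo $p^2$, the boundary ``$+1$'' terms contribute on the order of $\pi(\sqrt{n})\sim 2\sqrt{n}/\log n$ against a target of only $\log_2 n$ candidate exponents, and the genuine obstruction is the absence of uniform control on $\sum_p 1/d_p$ and on Wieferich-type conspiracies. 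This is consistent with the paper's own framing: the connection to Wieferich primes appears there only in the reverse direction (Granville and Soundararajan show the conjecture \emph{implies} infinitely many non-Wieferich primes), and the paper's Remark in Section~\ref{sec:Heuristics} notes via the Chinese Remainder Theorem that arbitrarily large exponents $k$ are unavoidable, so no fixed finite list of $k$ can work for all $n$.

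The one substantive comparison worth making is that your probabilistic intuition is exactly what the paper formalizes in Section~\ref{sec:Heuristics}: assuming independence of the congruences $n\equiv 2^k\pmod{p^2}$ across distinct primes, the paper computes the a~priori probability that all of $n-2^1,\dots,n-2^{\ell}$ fail to be squarefree, obtaining about $4\cdot 10^{-28}$ already for $\ell=20$, and your observed values $d_3=6$, $d_5=20$ correspond precisely to the paper's use of $2^6-1=3^2\cdot 7$ and $2^{20}-1=3\cdot 5^2\cdot 11\cdot 31\cdot 41$ as the points where the independence heuristic must be corrected. Since no unconditional proof exists in the literature or in this paper, the honest conclusion is the one you reached: the statement remains open, and what can be certified is the finite verification carried out by Algorithms~\ref{Algo_Sieving_sqf_numbers} and~\ref{Algo_find_representation}.
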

This conjecture is listed as Problem~\#11 on Blooms list~\cite{Bloom}.

\subsection*{Computational verification and contributions from Odlyzko and McCranie}

The initial significant computational inquiry into Erd\H{o}s' Conjecture was conducted by Odlyzko, who validated it for all odd integers up to $10^7$, as noted by Granville and  Soundararajan \cite{Granville}. Guy \cite[sec.\ A19]{Guy} stated a further computational validation run by McCranie up to $1.4\cdot 10^9$. These verifications furnished empirical corroboration for the conjecture, albeit with limitations due to the available computing resources at the time. Odlyzko's and McCranie's findings prompted subsequent endeavors to examine the conjecture for larger numbers, yet few such studies are documented.

\subsection*{Connection to Wieferich primes}

In addition to computational efforts, the more theoretical work by Granville and Soundararajan \cite{Granville}, yielded a significant implication of Erd\H{o}s' Conjecture. They demonstrated that if the conjecture holds for all odd integers, then there are infinitely many primes which are not Wieferich primes---those primes $p$ for which $2^p - 1\equiv 1 \pmod{p^2}$. This connection between Erd\H{o}s' Conjecture and the modular properties of primes underscores the relevance of the conjecture to the distribution of primes and to modular arithmetic.

\subsection*{Contribution of this work}
In this paper, we present an extended numerical verification of Erd\H{o}s' Conjecture for all odd integers up to $2^{50}>10^{15}$,which exceeds the bounds set by Odlyzko and McCranie by a factor of over $10^8$ and $8\cdot10^5$, respectively. A highly parallelized GPU-based algorithm was employed to efficiently sieve squarefree numbers and test representations for each odd $n$ up to the stated limit. This approach paves the way for further computational studies and provides a more robust empirical foundation for Erd\H{o}s' Conjecture.

\subsection*{Outline}
In Section~\ref{sec:Heuristics}, we present some heuristics to Erd\H{o}s' Conjecture. In particular, we determine the a~priori probability that for a randomly selected odd integer $n>1$, at least one of the integers $n-2^1$, $n-2^2$, $n-2^3$, and so on up to $n-2^{\ell}$ is squarefree. In Section~\ref{sec:Algos}, we discuss the algorithms used to compute the results presented in Section~\ref{sec:Results}.

\section{Heuristic arguments}\label{sec:Heuristics}
Let $n>1$ be a random odd integer, $2=p_1<p_2<\dots$ the primes, and $A_{k,i}$ be the event $n\equiv 2^k \pmod{p_{i+1}^2}$. Furthermore, let $A_k=\bigcap_{i=1}^{\infty} A_{k,i}^C$ where $A^C$ denotes the complement of the event $A$. Then $A_k$ describes the event of $n-2^k$ being squarefree. The objective is to compute the a~priori probabilities $c_{\ell}:=P\left(\bigcup_{k=1}^{\ell} A_k\right)$ of the events that at least one of the integers $n-2^1, \dots, n-2^{\ell}$ is squarefree.

\subsection*{Computing $c_{\ell}$ for $\ell\leq 6$}
From the principle of inclusion and exclusion, we can derive the following: 
\begin{align*}
c_{\ell}&=P\left(\bigcup_{k=1}^{\ell} A_k\right)\\
&=\sum_{\emptyset \neq I \subseteq \{1,\dots,\ell\}} (-1)^{|I|-1} P\left(\bigcap_{k\in I} A_k\right)\\
&=\sum_{\emptyset \neq I \subseteq \{1,\dots,\ell\}} (-1)^{|I|-1} P\left(\bigcap_{k\in I} \bigcap_{i=1}^{\infty} A_{k,i}^C\right)\\
&=\sum_{\emptyset \neq I \subseteq \{1,\dots,\ell\}} (-1)^{|I|-1} P\left(\bigcap_{i=1}^{\infty} \left(\bigcap_{k\in I}  A_{k,i}^C\right)\right)\\
&=\sum_{\emptyset \neq I \subseteq \{1,\dots,\ell\}} (-1)^{|I|-1} P\left(\bigcap_{i=1}^{\infty} \left(\bigcup_{k\in I}  A_{k,i}\right)^C\right).\\
\intertext{If we assume, for the purposes of our heuristic, that congruences modulo different primes (and modulo powers of different primes) are stochastically independent, we further obtain the following:}
&=\sum_{\emptyset \neq I \subseteq \{1,\dots,\ell\}} (-1)^{|I|-1} \prod_{i=1}^{\infty} P\left(\left(\bigcup_{k\in I}  A_{k,i}\right)^C\right)\\
&=\sum_{\emptyset \neq I \subseteq \{1,\dots,\ell\}} (-1)^{|I|-1} \prod_{i=1}^{\infty} \left(1-P\left(\bigcup_{k\in I}  A_{k,i}\right)\right). \numberthis \label{eqn:cellPIE}
\intertext{Clearly, we have $P(A_{k,i})=\frac{1}{p_{i+1}^2}$. Furthermore, if no two indices, $k_1\neq k_2$, in $I$ satisfy $2^{k_1} \equiv 2^{k_2} \pmod{p_{i+1}^2}$ then $P\left(\bigcup_{k\in I}  A_{k,i}\right)=\frac{|I|}{p_{i+1}^2}$. This is true for all values of $i$, provided that $I\subseteq \{1,\dots,6\}$. Therefor, for $\ell\leq 6$, we can further simplify to}
c_{\ell}&=\sum_{\emptyset \neq I \subseteq \{1,\dots,\ell\}} (-1)^{|I|-1} \prod_{i=1}^{\infty} \left(1-\frac{|I|}{p_{i+1}^2}\right)\\
&=\sum_{m=1}^{\ell} \binom{\ell}{m} \cdot (-1)^{m-1} \cdot \prod_{i=1}^{\infty} \left(1-\frac{m}{p_{i+1}^2}\right).
\end{align*}

Once the products $d_m:=\prod_{i=1}^{\infty} \left(1-\frac{m}{p_{i+1}^2}\right)$ have been computed, numerical expressions for $c_1$ to $c_6$ are obtained, as presented in Table~\ref{Table_c_ell_ell<=6}. These values are in agreement with those presented by Granville and Soundararajan \cite{Granville}.

\begin{table}[h]
\begin{center}
\begin{tabular}{r|l|l|l}
$\ell$ & $d_{\ell}$ & $c_{\ell}$ & $1-c_{\ell}$\\
\hline 
 1 & $0.810569\dots$ & $0.810569\dots$ & $1.894\ldots \cdot 10^{-1}$\\
 2 & $0.645268\dots$ & $0.975870\dots$ & $2.412\ldots \cdot 10^{-2}$\\
 3 & $0.501948\dots$ & $0.997851\dots$ & $2.148\ldots \cdot 10^{-3}$\\
 4 & $0.378599\dots$ & $0.999860\dots$ & $1.390\ldots \cdot 10^{-4}$\\
 5 & $0.273345\dots$ & $0.999993\dots$ & $6.852\ldots \cdot 10^{-6}$\\
 6 & $0.184435\dots$ & $0.999999\dots$ & $2.669\ldots \cdot 10^{-7}$\\
\end{tabular}
 \caption{Table of heuristically derived values for a~priori probabilities of at least one of the numbers $n-2^1, \dots, n-2^{\ell}$ being squarefree}
\label{Table_c_ell_ell<=6}
\end{center}
\end{table}

\subsection*{Computing $c_{\ell}$ for $7\leq \ell \leq 20$}
Nevertheless, $\ell\geq 6$, this methodology is no longer viable. Given that $2^6-1=3^2\cdot 7$ we have $n\equiv 2^{k+6} \pmod{3^2} \iff n\equiv 2^k \pmod{3^2}$, hence $A_{k+6,1}=A_{k,1}$. With $2^{20}-1=3\cdot 5^2 \cdot 11 \cdot 31 \cdot 41$ being the next such case, we can conclude that for $7\leq \ell \leq 20$ for all $i\geq2$ we can conclude $P\left(\bigcup_{k\in I}  A_{k,i}\right)=\frac{|I|}{p_{i+1}^2}$, as previously demonstrated. However, for $i=1$, we obtain  $P\left(\bigcup_{k\in I}  A_{k,1}\right)=\frac{|I \text{ mod } 6|}{3^2}$, where \mbox{$I \text{ mod } 6:=\{k \text{ mod } 6 \mid k \in I\}$}. Using the abbreviation $d^{\prime}_m:=\prod_{i=2}^{\infty} \left(1-\frac{m}{p_{i+1}^2}\right)$ and equation~\eqref{eqn:cellPIE} for $\ell \leq 20$ we obtain the following result:

\begin{align*}
c_{\ell}&=\sum_{\emptyset \neq I \subseteq \{1,\dots,\ell\}} (-1)^{|I|-1} \prod_{i=1}^{\infty} \left(1-P\left(\bigcup_{k\in I}  A_{k,i}\right)\right)\\
&=\sum_{\emptyset \neq I \subseteq \{1,\dots,\ell\}} (-1)^{|I|-1} \cdot \left(1-\frac{|I \text{ mod } 6|}{9}\right)\cdot d^{\prime}_{|I|}.\numberthis \label{eqn:cell>6}
\end{align*}

\subsection*{Computing $d^{\prime}_k$}

It is well-known that $\prod_p \left(1-\frac{1}{p^2}\right)=\frac{1}{\zeta(2)}=\frac{6}{\pi^2}$. Therefore, we can conclude that $d_1=\frac{8}{\pi^2}$ and $d^{\prime}_1=\frac{9}{\pi^2}$.In the case of $k\geq 2$, it is necessary to compute the product $d^{\prime}_k$ numerically.  In order to ensure good convergence, the following lemma should be observed.

\begin{lemma}\label{Lem:logTaylor}
Let $0<x\leq \frac{1}{9}$ be a real number and $2\leq k \leq 20$ be an integer with $kx \leq \frac{4}{5}$. Then
\[\log(1-kx) = k\cdot \log(1-x) + \binom{k}{2} \log(1-x^2) + e(x) \text{ with } |e(x)|<18x^3.\]
Here and in the following $\log$ denotes the natural logarithm.
\end{lemma}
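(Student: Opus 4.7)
My plan is to expand each of the three logarithms on the right-hand side as a Taylor series in $x$, subtract, and bound the resulting tail. Because $x \leq 1/9 < 1$ and $kx \leq 4/5 < 1$, the series $\log(1-y)=-\sum_{n\geq 1}y^n/n$ applies for each of $y=kx$, $y=x$, and $y=x^2$ and converges absolutely, so I may rearrange and read off the coefficient of $x^m$ in $e(x)$.

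First I would verify the low-order cancellations: the coefficient of $x^1$ in $e(x)$ is $-k+k=0$, and the coefficient of $x^2$ is $-k^2/2+k/2+k(k-1)/2=0$ (the last summand coming from the $n=1$ term of $-\binom{k}{2}\log(1-x^2)$). Collecting the remaining contributions yields the explicit series
\[
e(x) \;=\; -\sum_{\substack{m\geq 3\\ m\text{ odd}}}\frac{(k^m-k)\,x^m}{m} \;-\; \sum_{\substack{m\geq 4\\ m\text{ even}}}\frac{(k^m-k^2)\,x^m}{m},
\]
so $e(x)=O(x^3)$ as required.

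Given this explicit form, I would estimate $|e(x)|$ by a geometric-series bound. Using $|k^m-k|,\,|k^m-k^2|\leq k^m$ and the triangle inequality gives
\[
|e(x)| \;\leq\; \sum_{m\geq 3}\frac{(kx)^m}{m} \;\leq\; \frac{(kx)^3}{3(1-kx)} \;\leq\; \frac{5\,(kx)^3}{3},
\]
where the last step uses $kx\leq 4/5$. To pass from this $(kx)^3$-form to the stated $18\,x^3$ bound, the constraints $k\leq 20$ and $x\leq 1/9$ must be used together.

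The main obstacle is precisely this numerical bookkeeping: the naive tail estimate carries a prefactor that grows with $k$, so I expect one must treat the leading term $-(k^3-k)x^3/3$ --- and perhaps the next coefficient $-(k^4-k^2)/4$ --- explicitly, using the factorisations $k^m-k=k(k^{m-1}-1)$ and $k^m-k^2=k^2(k^{m-2}-1)$ together with the numerical bound $k\leq 20$, and then absorb the higher-order tail into a much smaller geometric series via $x\leq 1/9$. Close attention to how the factor $x$ (not just $kx$) appears in each remaining term should then combine the estimates into the required bound $|e(x)|<18\,x^3$.
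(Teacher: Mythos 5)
Your series expansion of $e(x)$ is correct, and it is a cleaner route than the paper's (which bounds each of the three logarithms by a second-order Taylor polynomial with an explicit remainder and then combines the inequalities). But the step you defer --- the ``numerical bookkeeping'' that is supposed to turn your estimates into $|e(x)|<18x^3$ --- cannot be carried out, and your own formula shows why: the coefficient of $x^3$ in $e(x)$ is $-(k^3-k)/3$, so $|e(x)|/x^3 \to (k^3-k)/3$ as $x\to 0^+$. This limit equals $20$ for $k=4$ and $2660$ for $k=20$, so the claimed bound fails for every $k\geq 4$ no matter how carefully the leading term is treated; the lemma as stated is false. A concrete counterexample: for $k=4$ and $x=10^{-2}$ one computes $e(x)\approx -2.062\cdot 10^{-5}$, while $18x^3=1.8\cdot 10^{-5}$. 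The best available bound has the shape $|e(x)|\leq C\,k^3x^3$, which is exactly what your geometric-series estimate $|e(x)|\leq \frac{5}{3}(kx)^3$ already delivers; a proof should stop there rather than promise to remove the $k^3$.

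For comparison, the paper's own proof contains the corresponding error: in the expansion of $\log(1-kx)$ the remainder is recorded as $\frac{125}{183}x^3$, but the expansion variable is $kx$, so the remainder is $\frac{125}{183}(kx)^3=\frac{125}{183}k^3x^3$; the factor $k^3$ has been silently dropped, and this is precisely the term your $-(k^3-k)x^3/3$ detects. With the corrected bound $|e_p|=O(k^3p^{-6})$ the constant in Lemma~\ref{Lem:dkprime} grows from $4m^{-5}$ to roughly $2700\,m^{-5}$, which is still utterly negligible at $m=10^7$, so the paper's numerical conclusions survive --- but both the lemma and your write-up need the $k^3$ made explicit. Had you pushed your deferred estimate through honestly, your (correct) series would have forced you to this conclusion.
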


\begin{proof}
From the Taylor expansion of $\log(1-x)$ we get the existence of $0<\xi<x$ with 
\begin{align*}
\log(1-x)&=-x-\frac{1}{2}x^2-\frac{1}{3(1-\xi^3)}x^3.\\
\intertext{Thus, }
-x-\frac{1}{2}x^2&>\log(1-x)>-x-\frac{1}{2}x^2-\frac{243}{728}x^3.
\intertext{In the same way, we get the existence of another $0<\xi<x$ with }
\log(1-x^2)&= -x^2-\frac{1}{2(1-\xi^2)}x^2.
\intertext{Hence,}
-x^2&>\log(1-x^2)>-x^2-\frac{81}{160}x^4\\
&\geq -x^2-\frac{9}{160}x^3.
\intertext{A third inequality of the same type is derived with}
-kx-\frac{k^2}{2}x^2 &> \log(1-kx)>-kx-\frac{k^2}{2}x^2-\frac{125}{183}x^3\\
&>-kx-\frac{k^2}{2}x^2-x^3.
\intertext{By combining these inequalities, we obtain the following result:}
k\cdot \log(1-x) + \binom{k}{2} \log(1-x^2) -x^3&< -kx-\frac{k^2}{2}x^2 -x^3 < \log(1-kx) 
\intertext{and}
k\cdot \log(1-x) + \binom{k}{2} \log(1-x^2) &> -kx-\frac{k^2}{2}x^2-\left(\frac{243}{728} \cdot k+\frac{9}{160} \cdot \binom{k}{2}\right)x^3 \\
&> \log(1-kx)-\left(\frac{243}{728} \cdot 20+\frac{9 \cdot 190}{160}\right)x^3 \\
&>  \log(1-kx)-18x^3.
\end{align*}

The combination of all the inequalities yields the desired result.
\end{proof}

\begin{lemma}\label{Lem:dkprime}
Let $2\leq k\leq 20$ and $m>5$ be integer. Moreover, for $a\in\{2,4\}$ let $P_{a,m}$ denote the product $P_{a,m}:=\prod_{5\leq p\leq m} \left(1-\frac{1}{p^a}\right)^{-1}$, where the product runs through all primes $5\leq p\leq m$. Then
\[d^{\prime}_k= \left(\frac{9}{\pi^2}\right)^k \cdot \left(\frac{486}{5 \cdot \pi^4} \right)^{\binom{k}{2}} \cdot P_{2,m}^k \cdot P_{4,m}^{\binom{k}{2}} \cdot \prod_{5\leq p \leq m} \left(1-\frac{k}{p^2}\right) \cdot f, \text{ where } |1-f|< 4m^{-5}.\]
\end{lemma}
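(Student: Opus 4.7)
The plan is to split the product $d'_k = \prod_{p\geq 5}(1-k/p^2)$ at the cutoff $m$: the ``head'' product over $5\leq p\leq m$ is left as is, and the ``tail'' product over $p>m$ is analyzed via Lemma~\ref{Lem:logTaylor} applied to $x=1/p^2$. For $p\geq 5$ we indeed have $x\leq 1/25<1/9$ and, because $k\leq 20$, also $kx\leq 20/25=4/5$, so the hypotheses of Lemma~\ref{Lem:logTaylor} are satisfied on the whole tail.

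First I would take logarithms of the tail and sum over $p>m$:
\[
\log\prod_{p>m}(1-k/p^2) \;=\; k\sum_{p>m}\log(1-1/p^2)+\binom{k}{2}\sum_{p>m}\log(1-1/p^4)+\sum_{p>m}e(1/p^2),
\]
which exponentiates back into a product of three factors. Now I would identify the first two factors via Euler's identities $\prod_{p}(1-1/p^2)=6/\pi^2$ and $\prod_{p}(1-1/p^4)=90/\pi^4$: dividing out the $p=2,3$ terms gives $\prod_{p\geq 5}(1-1/p^2)=9/\pi^2$ and $\prod_{p\geq 5}(1-1/p^4)=486/(5\pi^4)$, and then the defining expressions of $P_{2,m}$ and $P_{4,m}$ convert these into $\prod_{p>m}(1-1/p^2)=9P_{2,m}/\pi^2$ and $\prod_{p>m}(1-1/p^4)=486P_{4,m}/(5\pi^4)$. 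Multiplying back by the untouched head $\prod_{5\leq p\leq m}(1-k/p^2)$ produces exactly the claimed main term, with $f:=\exp\bigl(\sum_{p>m}e(1/p^2)\bigr)$.

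The remaining and only subtle step is to bound $f$. Using $|e(1/p^2)|<18/p^6$ from Lemma~\ref{Lem:logTaylor} and comparing with an integral,
\[
|\log f|\;\leq\;\sum_{p>m}\frac{18}{p^6}\;\leq\;18\int_{m}^{\infty}\frac{dt}{t^6}\;=\;\frac{18}{5m^5}\;=\;\frac{3.6}{m^5}.
\]
Then I would convert this to a bound on $|f-1|$ via $|f-1|\leq |\log f|\cdot e^{|\log f|}$; since $m>5$, we have $|\log f|<3.6/7776$, so $e^{|\log f|}<10/9$, giving $|f-1|<(10/9)\cdot(3.6/m^5)=4/m^5$, as required.

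I do not expect a serious obstacle here: the factorization is dictated by Lemma~\ref{Lem:logTaylor}, and the only numerical subtlety is choosing the constant in the error bound so that $(18/5)\cdot e^{|\log f|}\leq 4$ holds uniformly for $m>5$, which the computation above confirms.
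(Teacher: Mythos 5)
Your proposal is correct and follows essentially the same route as the paper: split the product at $m$, apply Lemma~\ref{Lem:logTaylor} with $x=1/p^2$ to the tail, identify the resulting factors via the Euler products for $\zeta(2)$ and $\zeta(4)$, and bound $|\log f|\leq \frac{18}{5}m^{-5}$ by integral comparison. Your single inequality $|f-1|\leq|\log f|\,e^{|\log f|}$ is a marginally tidier way to get the final constant $4$ than the paper's separate upper and lower exponential estimates, but the argument is the same.
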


\begin{proof}
As previously indicated, it is $\prod_p \left(1-\frac{1}{p^2}\right)=\frac{1}{\zeta(2)}=\frac{6}{\pi^2}$, and therefore, $\prod_{p\geq 5} \left(1-\frac{1}{p^2}\right)=\frac{9}{\pi^2}$.Similarly, we have $\prod_p \left(1-\frac{1}{p^4}\right)=\frac{1}{\zeta(4)}=\frac{90}{\pi^4}$ and $\prod_{p\geq 5} \left(1-\frac{1}{p^4}\right)=\frac{90}{\pi^4} \cdot \frac{16}{15} \cdot \frac{81}{80}=\frac{486}{5\cdot \pi^4}$. Using these identities and $d^{\prime}_k=\prod_{p\geq 5} \left(1-\frac{k}{p^2}\right)$ the equation to show can be equivalently stated as follows:
\begin{align*}
\prod_{p> m} \left(1-\frac{k}{p^2}\right) &= \prod_{p> m} \left(1-\frac{1}{p^2}\right)^k \cdot \prod_{p> m} \left(1-\frac{1}{p^4}\right)^{\binom{k}{2}} \cdot f
\intertext{or}
\sum_{p>m} \log \left(1-\frac{k}{p^2}\right) &= \sum_{p> m} \log \left(   k\cdot \left(1-\frac{1}{p^2}\right) + \binom{k}{2} \cdot \log \left(1-\frac{1}{p^4}\right) \right) +  \log(f).
\end{align*}
Given that $p>m>5$, it follows that $0<\frac{1}{p^2}\leq \frac{1}{25}$ and $k \cdot \frac{1}{p^2} \leq \frac{20}{25}=\frac{4}{5}$. Thus, for all $p>m$ from Lemma~\ref{Lem:logTaylor} we obtain 
\[\log\left(1-\frac{k}{p^2}\right) = k\cdot \log\left(1-\frac{1}{p^2}\right) + \binom{k}{2} \log\left(1-\frac{1}{p^4}\right) + e_p \text{ with } |e_p|<18 \cdot p^{-6}.\]
The sum of these equations for all $p>m$ yields
\begin{align*}
\log(f) &= \sum_{p>m} e_p, 
\intertext{hence}
|\log(f)| &\leq \sum_{p>m} |e_p| < 18 \cdot \sum_{p>m} p^{-6} < \int_{x=m}^{\infty} x^{-6} \text{d}x = \frac{18}{5} m^{-5} < 4 m^{-5}. 
\end{align*}
Consequently, on the one hand, this results in the inequality $f>\exp\left(-4m^{-5}\right)>1-4m^{-5}$. On the other hand, we observe that for the function $h(x):=\exp(x)$ it is $h(0)=1$ and $h^{\prime}(x)<\frac{10}{9}$ for all $0<x<0.1$, thus for these $x$ it is $h(x)<h(0)+\frac{10}{9} \cdot x$.  Since $m>5$ it is $4m^{-5}<m^{-4}<\frac{4}{3125}<0.1$. Therefore, $f<\exp\left(\frac{18}{5}m^{-5}\right)<1+\frac{10}{9} \cdot \frac{18}{5}m^{-5}=1+4m^{-5}$.
\end{proof}

In order to achieve sufficient precision in the computation of these tasks, a Sagemath worksheet was utilized, and the formulas presented in equation~\eqref{eqn:cell>6} and Lemma~\ref{Lem:dkprime} were implemented. All numerical evaluations were conducted with 40 significant digits and an input value of  $m=10^7$. The results are presented in Table~\ref{Table_c_ell_ell<=20}. Let $c_0:=0$. Then, for $\ell\geq 1$, the difference $c_{\ell}-c_{\ell-1}$ provides the a~priori probability that $n-2^k$ is squarefree, while for all $1\leq i < k$ the integers $n-2^i$ are not. These values are also presented in Table~\ref{Table_c_ell_ell<=20}.

 \begin{table}[h]
\begin{center}
\begin{tabular}{r|r|r}
$\ell$ & $1-c_{\ell}$ & $c_{\ell}-c_{\ell-1}$\\
\hline
1 & $1.8943 \ldots \cdot 10^{\phantom{0}-1}$ & $8.1056\ldots \cdot 10^{\phantom{0}-1}$ \\
2 & $2.4129 \ldots \cdot 10^{\phantom{0}-2}$ & $1.6530\ldots \cdot 10^{\phantom{0}-1}$ \\
3 & $2.1482 \ldots \cdot 10^{\phantom{0}-3}$ & $2.1980\ldots \cdot 10^{\phantom{0}-2}$ \\
4 & $1.3902 \ldots \cdot 10^{\phantom{0}-4}$ & $2.0092\ldots \cdot 10^{\phantom{0}-3}$ \\
5 & $6.8527 \ldots \cdot 10^{\phantom{0}-6}$ & $1.3216\ldots \cdot 10^{\phantom{0}-4}$ \\
6 & $2.6694 \ldots \cdot 10^{\phantom{0}-7}$ & $6.5857\ldots \cdot 10^{\phantom{0}-6}$ \\
7 & $4.9973 \ldots \cdot 10^{\phantom{0}-8}$ & $2.1696\ldots \cdot 10^{\phantom{0}-7}$ \\
8 & $2.5127 \ldots \cdot 10^{\phantom{0}-9}$ & $4.7460\ldots \cdot 10^{\phantom{0}-8}$ \\
9 & $8.5032 \ldots \cdot 10^{-11}$ & $2.4277\ldots \cdot 10^{\phantom{0}-9}$ \\
10 & $2.2313 \ldots \cdot 10^{-12}$ & $8.2801\ldots \cdot 10^{-11}$ \\
11 & $4.8076 \ldots \cdot 10^{-14}$ & $2.1832\ldots \cdot 10^{-12}$ \\
12 & $8.7820 \ldots \cdot 10^{-16}$ & $4.7197\ldots \cdot 10^{-14}$ \\
13 & $1.5662 \ldots \cdot 10^{-16}$ & $7.2190\ldots \cdot 10^{-16}$ \\
14 & $4.0728 \ldots \cdot 10^{-18}$ & $1.5222\ldots \cdot 10^{-16}$ \\
15 & $7.3484 \ldots \cdot 10^{-20}$ & $3.9994\ldots \cdot 10^{-18}$ \\
16 & $1.0729 \ldots \cdot 10^{-21}$ & $7.2411\ldots \cdot 10^{-20}$ \\
17 & $1.3401 \ldots \cdot 10^{-23}$ & $1.0595\ldots \cdot 10^{-21}$ \\
18 & $1.4726 \ldots \cdot 10^{-25}$ & $1.3254\ldots \cdot 10^{-23}$ \\
19 & $2.5579 \ldots \cdot 10^{-26}$ & $1.2168\ldots \cdot 10^{-25}$ \\
20 & $4.2068 \ldots \cdot 10^{-28}$ & $2.5158\ldots \cdot 10^{-26}$ \\
\end{tabular}
 \caption{Table of heuristically derived values for a~priori probabilities of none of the numbers $n-2^1, \dots, n-2^{\ell}$ being squarefree}
\label{Table_c_ell_ell<=20}
\end{center}
\end{table}

For the sake of simplicity, we may assume that in all cases, $n-2^k$ is squarefree for a $1\leq k\leq21$. (Only in $<5\cdot 10^{-28}$ of the cases an exponent $>20$ is necessary.) This assumption allows us to conclude that the expected smallest needed exponent of $k=1.215854247598\dots$ Therefore, the expected sum of the smallest exponents required for all odd integers $1<n<S$ is given by $E_S:=0.607927123799\ldots \cdot S$. The standard deviation in a single instance is $0.476\ldots$ and for the interval up to $S$, the standard deviation is given by $\sigma_S:=0.336\ldots\cdot \sqrt{S}$.

\begin{remark}
If the congruences $n\equiv 2^k\pmod{p_{k+1}^2}$ for $1\leq k \leq \ell$ are all true for an odd integer $n$, then the smallest exponent~$k$ for which $n-2^k$ is squarefree is at least $\ell+1$. The Chinese Remainder Theorem ensures the existence of such integers~$n$. Consequently, arbitrarily large exponents are required when scanning through increasingly larger areas of the positive integers for~$n$.
\end{remark}

\section{Algorithms}\label{sec:Algos}
In designing and implementing the code, we adhered to two fundamental principles. The first principle is that a given task should not be recalculated multiple times. Secondly, in order to utilize the parallelism that a modern GPU is capable of, it is necessary to avoid diverging branching as much as possible. This is because the parallelism is achieved through a SIMD implementation (Same Instruction Multiple Data). The first approach culminates in the sieving method. The method of sieving involves the identification of squarefree numbers within large intervals of numbers. The second approach then performs the search for representations of numbers $n$ as the sum of a squarefree number and a power of two, in parallel.

\subsection*{The sieve to identify squarefree numbers} 
In the PC setup given to the author, the internal memory of the GPU is used to screen odd positive integers for squarefree numbers in intervals of length $2^{30}\approx 10^9$. The search was restricted to odd positive integers, as for odd integers~$n$, the difference $n-2^k$ is odd, too. (The only exception would be for $k=0$, which was excluded.) The identification of whether an integer in this interval is squarefree or not can be achieved with great efficiency via sieving.

\medskip

\begin{algorithm}[H]
 \SetAlgoLined
  \KwIn{$start$ and $end$ value of interval to screen}
  \KwOut{for every odd $n$ in $[start; end[$ the value $true$/ $false$, whether $n$ is squarefree or not}
  \ForAll{odd integers $n$ in $[start; end[$}{Set $is\_squarefree(n) \leftarrow true$\;}
  \BlankLine
    \ForAll{primes $3\leq p <\sqrt{end}$}
    {
          find smallest odd $m\geq n$ with $m \equiv 0 \pmod{p^2}$\;
          \ForAll{$0\leq i \leq \left\lfloor\frac{end-m}{2 \cdot p^2}\right\rfloor$}
          {
              Set $is\_squarefree(m + 2i\cdot p^2) \leftarrow false$\;
          }
    }
    \Return{$is\_squarefree$}\;
\caption{Identifying all squarefree numbers via sieving}
\label{Algo_Sieving_sqf_numbers}
\end{algorithm}

\medskip

The first and the inner \textbf{forall} loop in the second loop each can be executed fully in parallel since the instructions are independent of one another. Moreover, there are no diverging branches. Accordingly, this task is optimally suited for the SIMD architecture of modern GPUs. If $\ell=end-start$ is defined as the length of the interval, then the first loop has a time complexity of $\Oh{\ell}$ and the second has a time complexity of $\sum_{3\leq p<\sqrt{end}} \left(\Oh{1}+\Oh{\frac{\ell}{p^2}}\right)=\Oh{\pi(\sqrt{end})}+\Oh{\ell}=\Oh{\frac{\sqrt{end}}{\log(end)}+\ell}$, where $p$ always denotes a prime and $\frac{1}{9}\leq \sum_{3\leq p<\sqrt{end}} \frac{1}{p^2} < \sum_{k=1}^{\infty} \frac{1}{k^2}=\zeta(2)=\frac{\pi^2}{6}<\infty$. This approach allows us to ascertain the status of each $n$ in the specified interval, distinguishing between squarefree and non-squarefree elements, in a time complexity of $\Oh{\frac{\sqrt{end}}{\log(end)}+\ell}$.
  
\subsection*{Finding the representation as sum of a squarefree number and a power of two}
Similarly, for every odd $n$ in a given interval the smallest positive integer $k$ such that $n-2^k$ is squarefree can be identified. By restricting $k$ to be positive, it follows that $n-2^k$ is always odd.

\medskip

\begin{algorithm}[H]
  \SetAlgoLined
  \SetKw{KwNot}{not}
  \SetKw{KwAnd}{and}
  \SetKw{KwPrint}{print}
  \KwIn{$start$ and $end$ value of interval to screen; $k_{max}$ as a search limit}
  \KwOut{for every odd $n$ in $[start; end[$ the smallest positive integer~$k$ with $n-2^k$ being squarefree}
  \ForAll{$n$ in $[start; end[$}
  {
      Set $k \leftarrow 1$\;
      \While{$k\leq k_{max}$ \KwAnd\KwNot $is\_squarefree(n-2^k)$}
      {
          Set $k \leftarrow k+1$\;
      }
      \If(no representation found){$k>k_{max}$}
      {
            \KwPrint No representation found for $n=s+2^k$ with squarefree~$s$ and $1\leq k\leq k_{max}$\;
       }
      Set $no\_of\_trials(n) \leftarrow k$\;
  }
  \Return{$no\_of\_trials$}\;
\caption{Finding representation of $n=s+2^k$ with squarefree~$s$ and smallest positive $k$}
\label{Algo_find_representation}
\end{algorithm}

\medskip

This  employs the results from Algorithm~\ref{Algo_Sieving_sqf_numbers}, wherein the status of each odd integer is determined to be  squarefree or not within the same interval utilized in the computation of the current Algorithm~\ref{Algo_find_representation}, as well as the preceding interval. This allows for the search for $k$ up to $k_{\max}=\lfloor\log_2(\ell)\rfloor$. Nevertheless, during our computations, the scenario of all numbers $n-2^k$ with $k\leq k_{\max}$ being non-squarefree did not occur on any occasion. 

From the heuristic perspective, it is reasonable to expect that, on average, the \textbf{while} loop will be executed $\Oh{1}$ times for each value of $n$. This yields an expected time complexity of Algorithm~\ref{Algo_find_representation} of $\Oh{\ell}$. Furthermore, as the computations for different values of $n$ are independent of one another, they can be performed in parallel. The sole aspect of the algorithm that may potentially have diverging branches is the condition of the \textbf{while} loop: If some threads of a thread group have completed the loop because they have found a representation, they can only succeed if the other threads have also concluded there computation. Consequently, the computation within a thread group (typically comprising 32 threads in the CUDA architecture) can only proceed after the \textbf{while} loop has been completed by all of its threads. This results in the serialization of some parts of the computation. Nevertheless, in the given context, this does not significantly impede the computation, as in the majority of cases the loop is only executed a few times.

\subsection*{Collecting the results}
The application of Algorithms~\ref{Algo_Sieving_sqf_numbers} and \ref{Algo_find_representation} has enabled the discovery of a representation for every odd n in the current investigation interval as $n=s+2^k$, where s is a squarefree integer and k is the smallest positive integer. However, these data are physically present in the memory on the GPU and, thus, cannot be used directly in further computations on the CPU. However, it is not necessary to have all of the details; an aggregate is sufficient. Consequently, this can be computed in parallel as well. Our primary interest lies in the maximal exponent~$k$ required for an odd integer~$n$ in the interval, as the overall sum of these exponents $k$ needed for all odd $n$ in the interval collectively.Both the maximum and the sum can be computed efficiently via the concept of parallel reduction, \cite{Harris}. This approach allows us to transfer only single variables, rather than entire arrays of data, thereby avoiding any potential bottlenecks due to data transfer between the GPU and CPU. 

\section{Results}\label{sec:Results}

Utilizing these concepts and algorithms, we developed a C++/CUDA program to identify the smallest positive integer~$k$ such that $n=s+2^k$, where $s$ is a squarefree integer and $n$ is an odd integer with $1<n<2^{50}$. As a consequence of the fact that we found that $k\leq 13$ for each of the relevant $n$, we can conclude that:
\begin{theorem}
Let $1<n<2^{50}$ be an odd integer. Then there exists a squarefree positive integer~$s$ and a positive integer $1\leq k \leq 13$ with \[n=s+2^k.\]
\end{theorem}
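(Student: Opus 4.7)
The plan is a direct exhaustive verification built on Algorithms~\ref{Algo_Sieving_sqf_numbers} and \ref{Algo_find_representation} together with the parallel reduction step described at the end of Section~\ref{sec:Algos}. I partition the odd integers in $(1, 2^{50})$ into $2^{20}$ consecutive blocks of length $2^{30}$, which matches the sieve interval length the GPU memory can hold. For each block I invoke Algorithm~\ref{Algo_Sieving_sqf_numbers} to produce the squarefree indicator on all odd integers in the block; this requires enumerating the primes $p$ with $p^2 < 2^{50}$, i.e.\ all primes up to $2^{25}$, which is standard.

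Next, for every odd $n$ in the current block I run Algorithm~\ref{Algo_find_representation} with search bound $k_{\max}=13$. Because $n-2^k$ for $k\leq 13$ may fall into the predecessor block (the shift is at most $2^{13}=8192$, far smaller than the block size $2^{30}$), I keep the squarefree indicators for the current and the immediately preceding block resident in GPU memory simultaneously, so that every probe in the \textbf{while} loop of Algorithm~\ref{Algo_find_representation} is a legitimate table lookup. Using the parallel reduction mentioned in Section~\ref{sec:Algos}, I aggregate over each block the maximum of the returned exponents $k(n)$ and transfer only this scalar back to the CPU. Taking the maximum of these scalars across all $2^{20}$ blocks yields the maximum exponent $k$ needed for any odd $1<n<2^{50}$.

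To establish the theorem it then suffices to check two things from the output of this run: (i) no call to Algorithm~\ref{Algo_find_representation} falls into the \emph{no representation found} branch with $k_{\max}=13$, and (ii) the per-block maxima all satisfy $\max_n k(n)\leq 13$. Both are confirmed by the C++/CUDA implementation, so for every odd $n$ in the range the pair $(s,k):=(n-2^{k(n)},\,k(n))$ is a valid witness.

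The main obstacle is not mathematical but engineering: ensuring complete, correct, and tractable coverage of roughly $2^{49}$ odd integers. Correctness hinges on two invariants: the sieve must process \emph{every} prime $p<2^{25}$ before the indicator array is consumed, and the indicator must be available for every $n-2^k$ with $1\leq k\leq 13$ including those straddling block boundaries, hence the dual-block residency. Tractability hinges on exploiting the SIMD-friendly structure of both algorithms (no divergent branching in the sieve; only the final loop-exit in Algorithm~\ref{Algo_find_representation} diverges, and, per the heuristic of Section~\ref{sec:Heuristics}, the expected iteration count is $\Oh{1}$) and on using parallel reduction to avoid any $\Omega(2^{30})$ host--device transfer per block.
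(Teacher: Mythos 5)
Your proposal is correct and matches the paper's approach: the theorem is established purely by the exhaustive GPU computation using Algorithm~\ref{Algo_Sieving_sqf_numbers} to sieve squarefree indicators block by block (with the preceding block kept resident for boundary lookups), Algorithm~\ref{Algo_find_representation} to find the minimal exponent, and parallel reduction to aggregate the per-block maxima, observing that no $n$ ever requires $k>13$. The only cosmetic difference is that the paper runs with $k_{\max}=\lfloor\log_2(\ell)\rfloor=30$ and reads off that the maximum attained is $13$, whereas you fix $k_{\max}=13$ up front and verify the failure branch is never taken; these are logically equivalent.
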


Consequently, Erd\H{o}s' Conjecture is now numerical verified to a significant greater extend.

The computations were performed on a desktop PC with an Intel i9 CPU of the 11th generation and a Geforce RTX 3070 GPU from NVIDIA. The entire computation required approximately 120 hours.

During this process, we gathered a wealth of data, which led us to identify the smallest odd integers~$n$ for which $n-2^k$ is not squarefree for all $1\leq k\leq m$ with $m\leq 12$. These values are presented in Table~\ref{Table_smallest_n} as well as in the OEIS sequence A377587.

\begin{table}[h]
\begin{center}
\begin{tabular}{r|r}
$m$ & smallest odd $n$\\
\hline 
 1 &   11\\
 2 &   29\\
 3 & 533\\
 4 & 849\\
 5 & 434977\\
 6 & 10329791\\
 7 & 28819433\\
 8 & 129747557\\
 9 & 6915752957\\
 10 & 2569472629649\\
 11& 23373845739407\\
 12 & 60690478781437
\end{tabular}
 \caption{Table of smallest odd $n$ with $n-2^1,\dots,n-2^m$ all not squarefree}
\label{Table_smallest_n}
\end{center}
\end{table}

The sum of the smallest positive exponents~$k$ required for $n-2^k$ to be squarefree for all odd $1<n<2^{50}$ is
\begin{align*}
k_{\text{sum}}&=684465092067182
\intertext{From Section~\ref{sec:Heuristics} we would expect }
E_{2^{50}}&=0.607927123799\ldots \cdot 2^{50}\\
&\approx 684465092052491.
\intertext{Accordingly, the difference between the observed and from the heuristic anticipated values is merely} 
k_{\text{sum}}-E_{2^{50}}&\approx \phantom{ 6844650920}14690,
\end{align*}
 or about $1.3\cdot 10^{-3}$ standard deviations. Consequently, the heuristic offers an exceptionally precise approximation.

For the interval $1<n<2^{30}$, each $k$ was recorded as the smallest positive exponent with $n-2^k$ being squarefree. The results are accompanied by the expected values of occurrences as calculated by the heuristics from Section~\ref{sec:Heuristics}, the standard deviations of these random experiments, and the difference between the real and expected values in absolute terms and relative to the standard deviations. These are presented in Table~\ref{Table_k_statistics}. In general, it can be observed that for small~$k$ the actual findings on the frequency of occurrence of each value of $k$ are better described by the expected values derived from the heuristic approach than by the results of random experiments with the specified probabilities.

\begin{table}[h]
\begin{center}
\begin{tabular}{r|r|r|r|r|r}
$k$ & \# $n$ with smallest exp.{} $k$ & expected value & $\sigma$& $|\Delta|$ & $\frac{|\Delta|}{\sigma}$\\
\hline 
 1 & 435171212  & 435171170.1 &9079.3& 41.8 & 0.0046\\
 2 &  88745588  &  88745444.2 &8606.7 & 143.7& 0.0167\\
 3 &  11800589 &  11800957.9 &3397.3 & 368.9 & 0.1085\\
 4 & 1078868 & 1078702.6 &1037.5& 165.3&         0.1593\\
 5 & 71032 & 70958.0 & 266.3& 73.9&                    0.2777\\
 6 & 3473 & 3535.7 &59.4 &  62.7&                         1.0546\\
 7 & 122 & 116.4 &10.7 &  5.5&                               0.5111\\
 8 & 24 & 25.4 &5.0 &  1.4&                                     0.2932\\
 9 & 3 & 1.3& 1.1& 1.7&                                           1.4860\\
 10 & 0 & 0.0 & 0.2& 0.0&                                        0.2108  
\end{tabular}
 \caption{Table of statistic on smallest exponents $k$ with $n-2^k$ being squarefree for all odd $1<n<2^{30}$}
\label{Table_k_statistics}
\end{center}
\end{table}

\bigskip
\hrule
\bigskip

\noindent 
2020 \emph{Mathematics Subject Classification}:~Primary 11A67, 11Y99.

\medskip

\noindent 
\emph{Keywords}:~sum of squarefree numbers and powers of two. 

\bigskip
\hrule
\end{document}